\definecolor{darkred}{RGB}{105,0,0}
\newtheorem{theorem}{Theorem}
\newtheorem{proposition}[theorem]{Proposition}
\newtheorem{corollary}[theorem]{Corollary}
\newtheorem{conjecture}{Conjecture}
\newenvironment{proof}{\noindent\emph{Proof.}\hspace{.25em}}{\hspace*{\fill}
$\Box$\newline}
\def \QD1 {\hfill $\spadesuit$}
\newcommand{\De}{\Delta}
\newcommand{\de}{\delta}
\newcommand{\ganzo}{\mathbb{Z}_{\geq 0}}
\newcommand{\real}{\mathbb{R}}
\newcommand{\ganz}{\mathbb{Z}}
\numberwithin{equation}{section}
\begin{document}
\title{\bf Partitions of multigraphs under degree constraints}

\author{{{
Thomas Schweser}\thanks{partially supported by DAAD, Germany (as part of BMBF) and by the Ministry of Education Science, Research and Sport of the Slovak Republic within the project 57320575}
\thanks{
Technische Universit\"at Ilmenau, Inst. of Math., PF 100565, D-98684 Ilmenau, Germany. E-mail
address: thomas.schweser@tu-ilmenau.de}}
\and{{Michael Stiebitz}\footnotemark[1]
\thanks{
Technische Universit\"at Ilmenau, Inst. of Math., PF 100565, D-98684 Ilmenau, Germany. E-mail
address: michael.stiebitz@tu-ilmenau.de}}}

\date{}
\maketitle

\begin{abstract}
\noindent In 1996, Michael Stiebitz proved that if $G$ is a simple graph with $\de(G)\geq s+t+1$ and $s,t\in \ganzo$, then $V(G)$ can be partitioned into two sets $A$ and $B$ such that $\de(G[A])\geq s$ and $\de(G[B])\geq t$. In 2016, Amir Ban proved a similar result for weighted graphs. Let $G$ be a simple graph with at least two vertices, let $w:E(G) \to \real_{>0}$ be a weight function, let $s,t \in \real_{\geq 0}$, and let $W=\max_{e\in E(G)} w(e)$. If $\de(G)\geq s+t+2W$, then $V(G)$ can be partitioned into two sets $A$ and $B$ such that $\de(G[A])\geq s$ and $\de(G[B])\geq t$. This motivated us to consider this partition problem for multigraphs, or equivalently for weighted graphs $(G,w)$ with $w:E(G) \to \ganz_{\geq 1}$. We prove that if $s,t\in \ganz_{\geq 0}$ and $\de(G)\geq s+t+2W-1\geq 1$, then $V(G)$ can be partitioned into two sets $A$ and $B$ such that $\de(G[A])\geq s$ and $\de(G[B])\geq t$. We also prove a variable version of this result and show that for $K_4^-$-free graphs, the bound on the minimum degree can be decreased.
\end{abstract}

\noindent{\small{\bf AMS Subject Classification:} 05C15 }

\noindent{\small{\bf Keywords:} Multigraph decomposition, Vertex partition, Vertex degree }

\section{Introduction and main results}
All graphs considered in this paper are finite and undirected, and may have multiple edges but no loops. For a graph $G$, let $V(G)$ and $E(G)$ be the vertex set and the edge set of $G$, respectively. For $X,Y \subseteq V(G)$ let $E_G(X,Y)$ be the set of all edges joining a vertex of $X$ with a vertex of $Y$, and let $E_G(X) = E_G(X,X)$. The subgraph of $G$ {\em induced} by $X$
is denoted by $G[X]$, i.e., $V(G[X])=X$ and $E(G[X])=E_G(X)$. For a vertex $v$ of $G$, let $E_G(v)=E_G(\{v\},V(G) \setminus \{v\})$. Then $d_G(v)=|E_G(v)|$ is the {\em degree} of $v$ in $G$. As usual, $\de(G)$ is the {\em minimum degree} and $\De(G)$ is the {\em maximum degree} of $G$. Furthermore, we define the \emph{multiplicity} of two different vertices $u$ and $v$ by $\mu_G(u,v)=|E_G(u,v)|$. Given a vertex $u$, let $$w_G(u)=\max_{ v\in V(G) \setminus \{u\}}\mu_G(u,v)$$ be the \emph{weight} of $u$ in $G$. For the sake of readability, we will sometimes omit subscripts or brackets if the meaning is clear.

A sequence $(A_1,A_2, \ldots, A_p)$ of sets is called a {\em partition} of a set $V$ if $A_1, A_2, \ldots, A_p$ are pairwise disjoint non-empty subsets of $V$ such that their union is $V$.

Given a graph $G$ and two functions $a,b: V(G) \to \mathbb{R}_\geq 0$, a partition $(A,B)$ of $V(G)$ is called an {\em $(a,b)$-feasible partition} (of the vertex set) of $G$ if
\begin{itemize}
\item[{\rm (1)}] $d_{G[A]}(v)\geq a(v)$ for all $v\in A$, and
\item[{\rm (2)}] $d_{G[B]}(v)\geq b(v)$ for all $v\in B$.
\end{itemize}

In 1996, Stiebitz \cite{Stieb} proved the following partition result for  simple graphs, thus solving a conjecture due to Thomassen \cite{Thomassen}. Let $G$ be a simple graph, and let $a,b: V(G) \to \mathbb{Z}_{\geq 0}$ be two functions. Assume that $$d_G(v) \geq a(v) + b(v) + 1$$ for every vertex $v \in V(G)$. Then, there is an $(a,b)$-feasible partition of $G$. With a similar approach, Ban \cite{Ban} proved a related result for weighted graphs. Let $G$ be a simple graph, and let $w:E(G)\to
\mathbb{R}_{> 0}$ be a {\em weight function} for $G$. Then,
$$d_G(v)=\sum_{e \in E_G(v)} w(e) \mbox{ and } W_G(v)= \max_{e \in E_G(v)} w(e)$$
is the {\em weighted degree} of $v$ in $G$, respectively the {\em maximum weight} of an edge incident to $v$ in $G$. Moreover, let $a,b:V(G) \to \mathbb{R}_{\geq 0}$ be two functions. Assume  that $d_G(v) \geq a(v) + b(v) + 2W_G(v)$ for every vertex $v \in V(G)$. Then, there is an $(a,b)$-feasible partition of $G$. As Ban noticed, the boundary is sharp.

If we set $w(e)=1$ for all $e \in E(G)$ and assume that $a,b: V(G) \to \mathbb{Z}_{\geq 0}$, then Ban's Theorem states the same as Stiebitz's Theorem, except that Ban requires $d_G(v) \geq a(v) + b(v) + 2$ for all $v\in V(G)$, whereas Stiebitz only needs $d_G(v) \geq a(v) + b(v) + 1$ for all $v\in V(G)$. Hence, one may wonder, if at least in the case of a graph with integer weights, i.e. in the case of a multigraph, $d_G(v) \geq a(v) + b(v) + 2w_G(v) - 1$ for all $v \in V(G)$ is sufficient for the existence of an $(a,b)$-partition of $G$. That this is indeed the case, states the following theorem.

\begin{theorem}
\label{Theorem:Hauptsatz}
Let $G$ be a graph with $\de(G)\geq 1$, and let $a,b: V(G) \to \mathbb{Z}_{\geq0}$ be two functions such that $d_G(v) \geq a(v) + b(v) + 2w_G(v) - 1$ for every $v \in V(G)$. Then, there is an  $(a,b)$-feasible partition of $G$.
\end{theorem}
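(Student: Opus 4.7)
The plan is to argue by contradiction via an extremal partition. Among all partitions $(A,B)$ of $V(G)$ with both parts non-empty, I would choose one that minimizes a potential function, the natural first candidate being
$$\Phi(A,B) \;=\; \sum_{v \in A} b(v)\, d_{G[A]}(v) \;+\; \sum_{v \in B} a(v)\, d_{G[B]}(v),$$
with ties broken by a secondary criterion such as minimizing $|E_G(A,B)|$ (and, if necessary, a tertiary criterion on $\min(|A|,|B|)$). The goal is to show that any such minimizer is automatically $(a,b)$-feasible.

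Suppose for contradiction that some $v \in A$ satisfies $d_{G[A]}(v) \leq a(v)-1$. Then the hypothesis together with integrality yields
$$d_{G[B]}(v) \;\geq\; a(v)+b(v)+2w_G(v)-1-(a(v)-1) \;=\; b(v)+2w_G(v).$$
I would now move $v$ from $A$ to $B$ and compute the change $\Delta\Phi$. The ``diagonal'' contribution is $a(v)\,d_{G[B]}(v) - b(v)\,d_{G[A]}(v) \geq 2 a(v) w_G(v) + b(v)$, which is strictly positive whenever $a(v)>0$ or $b(v)>0$. The ``off-diagonal'' contribution from neighbors $u$ of $v$, namely $\sum_{u\in N_B(v)} a(u)\mu_G(u,v) - \sum_{u\in N_A(v)} b(u)\mu_G(u,v)$, is controlled using $\mu_G(u,v)\leq w_G(v)$; the factor $2w_G(v)$ on the gain side together with the integrality saving of $+1$ should be exactly enough to dominate it and produce $\Delta\Phi<0$, contradicting the minimality of $(A,B)$.

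Two special cases require separate treatment. The case $|A|=1$ prevents the move; here I would rule out $A=\{v\}$ with $d_{G[A]}(v)=0<a(v)$ either via the tertiary tie-break or by performing a swap with a suitable vertex of $B$, using $\delta(G)\geq 1$. The case $a(v)=b(v)=0$ is vacuous since then $d_{G[A]}(v)\geq 0$ is automatic, so we may assume the diagonal term is strictly positive.

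The hardest step, in my view, is locating a potential function for which the single-vertex move strictly decreases $\Phi$ precisely under the integer-shifted bound $2w_G(v)-1$ rather than Ban's continuous $2w_G(v)$. Because Ban's bound is already tight in the real-weighted setting, the improvement must come entirely from integrality: the argument must track whole units of multiplicity, not just weighted sums. If the simple potential above fails for some edge configuration, the fallback is a minimum-counterexample approach in $|V(G)|+|E(G)|$, in the spirit of Stiebitz's original proof: tightness would force $d_G(v) = a(v)+b(v)+2w_G(v)-1$ at every vertex, and one would then delete a carefully chosen edge to obtain a smaller instance to which induction applies, pulling back its feasible partition to $G$.
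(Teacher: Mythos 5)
Your central claim --- that a partition minimizing $\Phi$ (with tie-breaks) is \emph{automatically} $(a,b)$-feasible --- is false, and this is not a repairable detail of the potential function. Take $G=K_{3,3}$ with $a\equiv b\equiv 1$; then $d_G(v)=3=a(v)+b(v)+2w_G(v)-1$, so the hypothesis holds with equality. For these constant functions your $\Phi$ reduces to $2|E_G(A)|+2|E_G(B)|$, whose unique minimizer is the bipartition (the maximum cut), and there both induced subgraphs are edgeless, so neither side is feasible; moreover every single-vertex move strictly increases $\Phi$, so no contradiction can be extracted from minimality. (A feasible partition does exist --- a $C_4$ on one side and an edge on the other --- it just is not extremal.) Your sign bookkeeping already hints at the problem: you minimize $\Phi$ and therefore need $\Delta\Phi<0$, yet your own computation gives a \emph{positive} diagonal term, and the off-diagonal term $\sum_{u\in N_B(v)}a(u)\mu(u,v)-\sum_{u\in N_A(v)}b(u)\mu(u,v)$ involves the neighbours' function values, which are completely uncontrolled by any bound at $v$; there is no cancellation to be had.

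The paper's proof avoids both traps by a two-stage argument. First, an augmentation lemma (Proposition~\ref{FeasPart}) shows that any $(a,b)$-\emph{feasible pair} --- two disjoint sets $\tilde A,\tilde B$ with $\tilde A$ being $a$-nice and $\tilde B$ being $b$-nice, not required to cover $V(G)$ --- extends to a feasible partition, already under the weaker bound $a+b+2w_G-3$. Second, after disposing of vertices with $a(v)=0$ or $b(v)=0$ and of the case where no $(a,b)$-meager partition exists (Proposition~\ref{Non-Meager}), it maximizes $w(A,B)=|E_G(A)|+|E_G(B)|+\sum_{v\in A}b(v)+\sum_{v\in B}a(v)$ \emph{over meager partitions only}. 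This potential is designed so that moving one vertex changes $w$ by $d_B(x)-d_A(x)+a(x)-b(x)$, a quantity depending only on data at $x$ --- exactly the locality your $\Phi$ lacks. The conclusion drawn from extremality is not that $(A,B)$ itself is feasible, but that each side contains a non-empty nice subset, i.e.\ a feasible pair, to which the augmentation lemma is then applied. Your fallback sketch (induction on $|V|+|E|$ with an edge deletion) is not developed enough to assess, but as written the main line of the proposal does not yield a proof.
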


Kaneko \cite{Kan} as well as Bazgan, Tuza and Vanderpooten \cite{BazganTV} examined the case, that $G$ is a triangle-free simple graph. While Kaneko only considered constant functions $a$ and $b$, Bazgan, Tuza and Vanderpooten generalized Kaneko's result to the case of variable functions. Let $G$ be a triangle-free simple graph, and let $a,b:V(G) \to \mathbb{Z}_{\geq 1}$ be two functions. Assume, that $d_G(v) \geq a(v) + b(v)$ for all  $v \in V(G)$. Then, $G$ has an $(a,b)$-feasible partition. This theorem can also be extended to multigraphs.

\begin{theorem}
\label{Theorem:Dreikreisfrei}
Let $G$ be a triangle-free graph with $\de(G)\geq 1$, and let $a,b: V(G) \to \mathbb{Z}_{\geq 1}$ be two functions such that $d_G(v) \geq a(v) + b(v) + 2w_G(v) - 2$ for every $v \in V(G)$. Then, there is an $(a,b)$-feasible partition of $G$.
\end{theorem}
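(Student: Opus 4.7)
The plan is to adapt the switching argument behind Theorem~\ref{Theorem:Hauptsatz} while using the triangle-free hypothesis to save one unit in the degree bound. I proceed by contradiction: pick a counterexample $(G, a, b)$ with $|V(G)| + |E(G)|$ minimum. A first step, standard for minimum counterexamples, is to argue that the degree hypothesis is tight on every edge: otherwise a slack edge could be deleted to produce a smaller counterexample, because any $(a,b)$-feasible partition of the smaller graph lifts to one of $G$ (re-adding the edge inside its part only raises an endpoint's $A$- or $B$-degree).

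Next, among all partitions $(A, B)$ of $V(G)$ with both parts nonempty, I choose one minimizing a suitable potential, for instance the total deficiency
\[
\Phi(A, B) \;=\; \sum_{v \in A}\bigl(a(v) - d_{G[A]}(v)\bigr)^{+} \;+\; \sum_{v \in B}\bigl(b(v) - d_{G[B]}(v)\bigr)^{+},
\]
together with an auxiliary tie-breaker (e.g., maximizing $|E(G[A])|+|E(G[B])|$). Since $(A,B)$ is not $(a,b)$-feasible, there is some violating vertex, which without loss of generality is $v^* \in A$ with $d_{G[A]}(v^*) \le a(v^*) - 1$; the degree hypothesis then forces $d_{G[B]}(v^*) \ge b(v^*) + 2 w_G(v^*) - 1$, so $v^*$ has abundant neighbors in $B$. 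I would then analyze either moving $v^*$ from $A$ to $B$ or performing a double switch exchanging $v^*$ with a strategically chosen neighbor $u^* \in B$ realizing $\mu(u^*, v^*) = w_G(v^*)$. In either case, the change $\Delta\Phi$ can be expressed in terms of the multiplicities $\mu(u, v^*)$ for $u \in N_G(v^*)$, and $\Delta\Phi \ge 0$ (by minimality of $\Phi$) yields a quantitative inequality on $d_G(v^*)$.

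The triangle-free assumption provides the final unit of saving: since $N_G(v^*)$ is an independent set, the partner $u^*$ shares no common neighbor with $v^*$, killing an interaction term that would otherwise appear in the switching accounting and sharpening the inequality by exactly one. The main obstacle I anticipate is the bookkeeping of multi-edges, because a single multi-edge between $v^*$ and $u^*$ contributes $w_G(v^*)$ to the degree count while also interacting with the $w_G$ values of other vertices; the triangle-free hypothesis must be invoked at a precise step to close the bound at $2w_G - 2$ rather than $2w_G - 1$. A natural fallback, should the direct switching grow intractable, is induction on the maximum edge multiplicity: prove the simple-graph case (the Bazgan--Tuza--Vanderpooten theorem) as the base, then split off a maximum-multiplicity edge and invoke the inductive hypothesis.
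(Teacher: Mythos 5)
There is a genuine gap: your text is a plan rather than a proof, and the decisive step is exactly the one you leave open. You announce that you ``would then analyze'' a single move or a double switch and that triangle-freeness ``kills an interaction term,'' but you never carry out the accounting, and the potential you propose is unlikely to support it. Minimizing the truncated deficiency $\Phi(A,B)=\sum_{v\in A}(a(v)-d_{G[A]}(v))^{+}+\sum_{v\in B}(b(v)-d_{G[B]}(v))^{+}$ runs into the standard obstruction for this family of theorems: moving a violating vertex $v^*$ from $A$ to $B$ wipes out its own deficiency, but each neighbour $u\in A$ of $v^*$ loses $\mu(u,v^*)$ units of $A$-degree and may become deficient, so $\Delta\Phi\geq 0$ only tells you that the neighbours collectively absorb at least as much deficiency as $v^*$ sheds --- an inequality with no evident route to a contradiction, and one in which triangle-freeness plays no visible role (the independence of $N_G(v^*)$ controls edges among the neighbours, not the deficiency they acquire). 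This is precisely why the known arguments, including the one in this paper, do not use a deficiency potential. Your preliminary reduction is also shaky: deleting a ``slack'' edge can destroy $\delta(G)\geq 1$, and for a multigraph it changes $w_G$ at the endpoints as well as $d_G$, so tightness on every edge is not a clean consequence of minimality. The fallback induction on maximum multiplicity is not developed at all; splitting off a maximal-multiplicity edge alters $d_G(v)-2w_G(v)$ in a way you would still have to control.

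For comparison, the paper does not prove Theorem~\ref{Theorem:Dreikreisfrei} directly: it deduces it from the stronger Theorem~\ref{Theorem:K_4^--free}, whose proof rests on a different dichotomy. One first shows (Proposition~\ref{FeasPart}) that an $(a,b)$-feasible \emph{pair} of disjoint sets can always be grown to a feasible partition, and (Proposition~\ref{Non-Meager}) that a feasible partition exists unless there is an $(a,b)$-meager partition. In the remaining case one maximizes the weight $w(A,B)=|E_G(A)|+|E_G(B)|+\sum_{v\in A}b(v)+\sum_{v\in B}a(v)$ over meager partitions (with a secondary minimization of $|A|$) and extracts, from the low-degree vertices of $A$, a triangle $\{x,y,z\}$ with $x,z\in A$ adjacent and a common neighbour $y\in B$; the $K_4^-$-free (in particular triangle-free) hypothesis then forbids the configuration. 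If you want to complete your argument, the cleanest path is to replace your potential with this edge-counting weight restricted to meager partitions and to locate where a triangle is forced.
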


In 2017, Liu and Xu \cite{LiuXu} proved that one may obtain the same boundary as in the triangle-free case for $K_4^-$-free graphs, whereby $K_4^-$ denotes the graph that results from the $K_4$ by removing one edge. Let $H$ and $G$ be graphs. We say that $G$ is $H$\emph{-free}, if $G$ does not contain a graph isomorphic to $H$ as a subgraph. For graphs with multiple edges, we obtain the following.

\begin{theorem}
\label{Theorem:K_4^--free}
Let $G$ be a $K_4^-$-free graph with $\delta(G) \geq 1$, and let $a,b : V(G) \to \mathbb{Z}_{\geq 1}$ be two functions such that $d_G(v) \geq a(v) + b(v) + 2w_G(v) - 2$ for every $v \in V(G)$. Then, there is an $(a,b)$-feasible partition of $G$.
\end{theorem}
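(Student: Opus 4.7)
The plan is to mimic the extremal-partition approach used to establish Theorem \ref{Theorem:Dreikreisfrei}, replacing the triangle-free hypothesis with the $K_4^-$-free hypothesis in the key step. Assume for contradiction that the theorem fails and let $(G,a,b)$ be a counterexample with $|V(G)|$ minimum. Among all partitions $(A,B)$ of $V(G)$ with $A$ and $B$ nonempty, choose one minimizing the total deficiency
\begin{equation*}
\Phi(A,B) \;=\; \sum_{v\in A}(a(v)-d_{G[A]}(v))^+ \;+\; \sum_{v\in B}(b(v)-d_{G[B]}(v))^+,
\end{equation*}
where $x^+=\max(x,0)$. Since the partition is not $(a,b)$-feasible, $\Phi(A,B)\geq 1$; by symmetry I may assume there is a vertex $v\in A$ with $d_{G[A]}(v)<a(v)$, which I call \emph{$A$-bad}.

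The first step is to extract structural constraints on $v$ from the extremality of $\Phi$. Moving $v$ from $A$ to $B$ changes $\Phi$ by a quantity that splits vertex by vertex: $v$ itself contributes $-(a(v)-d_{G[A]}(v))+(b(v)-d_{G[B]}(v))^+$, each $u\in N_A(v)$ contributes at most $+\mu_G(u,v)$, and each $w\in N_B(v)$ contributes at least $-\mu_G(w,v)$. Extremality forces the total change to be nonnegative, and a careful per-vertex accounting---using $a(v),b(v)\geq 1$---implies that many $A$-neighbors of $v$ are themselves $A$-bad and that $d_{G[B]}(v)$ is very close to $b(v)$.

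The crux is the second step, where the $K_4^-$-freeness enters. If $u_1,u_2\in N_A(v)$ are adjacent, then $vu_1u_2$ is a triangle, and by hypothesis it is the unique triangle through the edge $u_1u_2$; consequently $u_1$ and $u_2$ share no common neighbor other than $v$, which bounds the overlap among swap-improvements at different $A$-neighbors of $v$. Applying the swap argument simultaneously to $v$ and to an $A$-bad neighbor $u$, and summing carefully over all such $u$, I expect to contradict the hypothesis by producing a strict inequality opposite to $d_G(v)\geq a(v)+b(v)+2w_G(v)-2$.

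The main obstacle is this second step: converting the local $K_4^-$-free structure around $v$ into the exact numerical inequality. In the triangle-free case there are no edges within $N_G(v)$ and the swap analysis is clean; under the $K_4^-$-free hypothesis, edges within $N_G(v)$ are permitted but come in ``isolated'' triangles through $v$, and one must show that each such triangle contributes at most $2w_G(v)$ to the relevant inequality rather than a larger quantity. Degenerate cases, such as $A=\{v\}$ or $v$ having no $A$-bad neighbor, are handled by minimality of $|V(G)|$, which allows removing an edge or a vertex and reducing to a smaller instance.
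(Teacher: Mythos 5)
There is a genuine gap: the entire content of the theorem sits in your ``second step,'' and you do not carry it out --- you say you ``expect to contradict the hypothesis,'' which is exactly the point at which the proof must actually be done. The paper's proof shows this step is not a routine computation. In particular, single-vertex swaps are not enough to close the argument. The decisive move in the paper is a \emph{two-vertex} exchange: after a long preparation one finds $y\in B$ with $d_B(y)\le b(y)+w_G(y)-2$ and two \emph{adjacent} vertices $x,z\in A$, both of low degree in $A$ and both neighbours of $y$ (so $\{x,y,z\}$ spans a triangle), and then moves $\{x,z\}$ to $B$ simultaneously; the strict gain of $2$ in the potential comes from the edge $xz$ being counted once on each side of the exchange, and $K_4^-$-freeness is what guarantees that the remaining vertices of $A$ lose at most one multi-edge bundle each, so that $A\setminus\{x,z\}$ stays $a$-nice. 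A deficiency functional analysed one vertex at a time, with the truncation $(\cdot)^+$, gives you no access to this pair move, and the ``overlap bound'' you gesture at (two adjacent $A$-neighbours of $v$ share no common neighbour other than $v$) is used in the paper in a different place and for a different purpose (to show each vertex outside the triangle has at most one neighbour in it).

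Two further points are asserted without justification and would fail as stated. First, the reduction of degenerate cases ``by minimality of $|V(G)|$'' does not work: deleting a vertex or an edge decreases $d_G$ at its neighbours but need not decrease $a(v)+b(v)+2w_G(v)-2$, so the smaller instance need not satisfy the hypothesis, and the paper never uses induction on $|V(G)|$ --- it instead proves directly (using $K_4^-$-freeness and an explicit feasible partition such as $(V(G)\setminus\{u,v\},\{u,v\})$) the preliminary inequality $a(u)+a(v)\ge 3$ and $b(u)+b(v)\ge 3$ for every edge $uv$, without which the later swap inequalities do not close. Your sketch has no analogue of this reduction. Second, the choice of extremal object matters: the paper maximizes $w(A,B)=|E_G(A)|+|E_G(B)|+\sum_{v\in A}b(v)+\sum_{v\in B}a(v)$ over $(a,b-1)$-meager partitions, with $|A|$ minimized as a tie-breaker, and both the meagerness framework (Propositions~\ref{FeasPart} and~\ref{Non-Meager}) and the tie-breaker are load-bearing; several of the contradictions are only $\ge 0$ in weight and are completed by the decrease in $|A|$. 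Your $\Phi$ records none of the edge terms and admits no such tie-breaking, so even the first step's ``careful per-vertex accounting'' would need to be rebuilt from scratch.
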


Note that this theorem obviously implies Theorem~\ref{Theorem:Dreikreisfrei}, so we abstain from giving an extra proof of the triangle-free case. By considering graphs with girth at least $5$, Diwan \cite{Diwan} as well as Gerber and Kobler \cite{GerberKo} managed to soften the degree-condition even more for constant, respectively variable functions $a,b$. Let $G$ be a simple graph with girth at least $5$, and let $a,b: V(G) \to \mathbb{Z}_{\geq 2}$ be two functions. Assume that $d_G(v) \geq a(v) + b(v) - 1$ for all $v \in V(G)$. Then, $G$ has an $(a,b)$-feasible partition.

Liu and Xu also generalized this theorem by considering triangle-free graphs in which each vertex is contained in at most one cycle of length $4$. They obtained the following. Let $G$ be a triangle-free simple graph of which each vertex is contained in at most one cycle of length $4$. Moreover, let $a,b: V(G) \to \mathbb{Z}_{\geq 2}$ be two functions such that $d_G(v) \geq a(v) + b(v) - 1$ for all $v \in V(G)$. Then, $G$ has an $(a,b)$-feasible partition.

It shows that it is not easily possible to adjust the proof neither of Gerber and Kobler's nor of Liu and Xu's Theorem in order to obtain the related statements for graphs in general. Our proof attempt in the first version of this paper turned out wrong. However, the evidence available indicates that the related statement is true, so we feel confident to conjecture the following.

\begin{conjecture}
\label{Conjecture:Girth5}
Let $G$ be a graph not containing cycles of length $3$ and $4$, and let $a,b: V(G) \to \mathbb{Z}_{\geq 2}$ be two functions such that $d_G(v) \geq a(v) + b(v) + 2w_G(v) - 3$ for every $v \in V(G)$. Then, there is an $(a,b)$-feasible partition of $G$.
\end{conjecture}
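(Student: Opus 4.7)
My plan is to follow the potential-function framework used for Theorems~\ref{Theorem:Hauptsatz}, \ref{Theorem:Dreikreisfrei}, and \ref{Theorem:K_4^--free}, strengthened by the girth hypothesis. I would fix a minimum counterexample $G$ (minimizing $|V(G)|+|E(G)|$) and, among all partitions $(A,B)$ of $V(G)$, pick one minimizing the total deficiency
$$\Phi(A,B)=\sum_{v\in A}\max\{0,a(v)-d_{G[A]}(v)\}+\sum_{v\in B}\max\{0,b(v)-d_{G[B]}(v)\},$$
breaking ties by, say, minimizing $|E_G(A,B)|$. If $\Phi(A,B)=0$, the partition is $(a,b)$-feasible; otherwise, by symmetry, pick $v\in A$ with $d_{G[A]}(v)\le a(v)-1$.

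The natural first attempt is to shift $v$ from $A$ to $B$. By the degree hypothesis, $d_{G[B]}(v)=d_G(v)-d_{G[A]}(v)\ge b(v)+2w_G(v)-2\ge b(v)$, so $v$'s own contribution to $\Phi$ drops from at least $1$ to $0$. The hazard is that every neighbor $u$ of $v$ in $A$ loses $\mu_G(u,v)$ units of induced degree, and the resulting increase of $\Phi$ has to be offset. Here the girth-$5$ condition should enter decisively: the neighbors of $v$ form an independent set, and any two of them share no common neighbor other than $v$. Hence the ``private neighborhoods'' $N_G(u)\setminus\{v\}$ for $u$ ranging over neighbors of $v$ are pairwise disjoint, which should prevent a single move from creating correlated deficiencies at several points.

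A single $v$-move will typically not suffice, since $d_{G[A]}(v)$ can be as large as $a(v)-1$. The plan is then to move $v$ simultaneously with a carefully chosen subset $S$ of tight or almost-tight neighbors of $v$ inside $A$. The disjointness above should guarantee that each $u\in S$, once in $B$, has new induced degree at least $b(u)+2w_G(u)-2-\mu_G(u,v)$, enough to be $b$-safe, while its departure from $A$ only touches its own private neighborhood and so cannot cascade through the remaining neighbors of $v$ in $A\setminus S$. The combinatorial claim one has to establish is that some non-empty set $\{v\}\cup S$ exists whose joint relocation strictly decreases $\Phi$, contradicting optimality.

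The principal obstacle, and presumably the reason the authors' first attempt collapsed, lies in coupling the multi-edge contribution with the girth slack. In a simple girth-$5$ graph each single-edge drop carries one unit of ``damage,'' and the $-1$ slack from the Gerber--Kobler theorem (which becomes $-3$ here once one absorbs the $2w_G(v)$ term) covers it cleanly. In a multigraph, however, one heavy edge $uv$ of weight $\mu_G(u,v)=w_G(v)$ can in a single blow plunge $u$ far below $a(u)$, swallowing the savings that girth alone provides. I expect the fix to require either refining $\Phi$ so that it already discounts multi-edge multiplicities, or proving an auxiliary structural lemma ruling out certain configurations of heavy multi-edges in a minimum counterexample of girth at least $5$; making this bookkeeping tight enough to absorb the full $2w_G(v)-3$ slack per vertex, while respecting the disjoint-neighborhood geometry, is where the genuinely new work seems to lie.
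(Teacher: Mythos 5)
The statement you are trying to prove is Conjecture~\ref{Conjecture:Girth5}; the paper offers no proof of it. The authors state explicitly that their own proof attempt in an earlier version ``turned out wrong'' and that neither the Gerber--Kobler argument nor the Liu--Xu argument adapts readily to multigraphs, which is precisely why the statement is left as a conjecture. So there is nothing in the paper to compare your argument against, and the relevant question is only whether your proposal closes the gap the authors could not. It does not: what you have written is a strategy outline whose decisive step is explicitly deferred. You acknowledge in your final paragraph that the bookkeeping needed to absorb the damage of a single heavy edge is ``where the genuinely new work seems to lie,'' which is an admission that the proof is incomplete at exactly the point where the known approaches break down.

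To make the gap concrete: in your potential argument, moving a deficient vertex $v$ from $A$ to $B$ decreases $\Phi$ by at most $a(v)-d_{G[A]}(v)$ from $v$'s own term, but each neighbour $u$ of $v$ in $A$ can see its deficiency term increase by up to $\mu_G(u,v)$, for a total increase of up to $d_{G[A]}(v)$, which the girth condition does nothing to bound. The girth-$5$ hypothesis gives you independence of $N(v)$ and disjointness of the private neighbourhoods, but those are statements about the \emph{simple} underlying graph; they place no constraint on the multiplicities $\mu_G(u,v)$, and a single edge of multiplicity $w_G(u)$ concentrated at one neighbour already defeats the per-vertex slack of $2w_G(\cdot)-3$. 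Your proposed remedy --- relocating $v$ together with a set $S$ of tight neighbours --- is stated only as ``the combinatorial claim one has to establish,'' with no candidate construction of $S$ and no verification that the joint move decreases $\Phi$ (note also that under the $-3$ hypothesis your bound $b(u)+2w_G(u)-2-\mu_G(u,v)$ on the new degree of $u\in S$ is not justified and can fall below $b(u)$ when $\mu_G(u,v)=w_G(u)$ and further edges from $u$ remain in $A\setminus(S\cup\{v\})$). Until that claim is formulated precisely and proved, the statement remains, as in the paper, a conjecture.
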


\begin{conjecture}
Let $G$ be a triangle-free graph of which each vertex is contained in at most one cycle of length $4$. Moreover, let $a,b: V(G) \to \mathbb{Z}_{\geq 2}$ be two functions such that $d_G(v) \geq a(v) + b(v) + 2w_G(v) - 3$ for every $v \in V(G)$. Then, there is an $(a,b)$-feasible partition of $G$.
\end{conjecture}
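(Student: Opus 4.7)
The plan is to run a Stiebitz/Liu--Xu style minimum counterexample argument, suitably strengthened to accommodate multi-edges together with the rigid girth hypothesis. Let $G$ be a counterexample minimizing $|V(G)|+|E(G)|$. A standard opening reduction shows the degree inequality must be tight at every vertex: if some $v$ satisfies $d_G(v) \geq a(v)+b(v)+2w_G(v)-2$, delete an edge $e \in E_G(v)$, apply minimality to $G-e$ to obtain an $(a,b)$-feasible partition, then observe that adding $e$ back preserves feasibility. Further reductions handle low-degree and pendant configurations so that in what remains each vertex plays an active role on both prospective sides.

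Next, choose a partition $(A,B)$ of $V(G)$ minimizing the deficit potential
\[
\Phi(A,B) = \sum_{v\in A}\bigl(a(v)-d_{G[A]}(v)\bigr)^{+} + \sum_{v\in B}\bigl(b(v)-d_{G[B]}(v)\bigr)^{+},
\]
with ties broken by minimizing $|E_G(A,B)|$. If $\Phi(A,B)=0$ we are done, so assume some $v\in A$ has $d_{G[A]}(v)<a(v)$; the tight degree identity then forces $d_{G[B]}(v)\geq b(v)+2w_G(v)-2$. Moving $v$ from $A$ to $B$ decreases the $A$-contribution by at least one, and increases the $B$-contribution by at most $\sum_{u\in N_G(v)\cap B, \, d_{G[B]}(u)<b(u)}\mu_G(u,v)$. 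By the choice of $(A,B)$, there must exist a nonempty set $U\subseteq N_G(v)\cap B$ of neighbors which become newly deficient after the swap and whose total multiplicity towards $v$ balances the bookkeeping exactly.

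The crux is to use the structural hypotheses to show that $U$ cannot exist. Triangle-freeness prohibits a vertex of $U$ from being adjacent in $G$ to another vertex of $N_G(v)$, while the ``at most one $C_4$ per vertex'' hypothesis forces that at most one pair $u,u'\in U$ can share a further common neighbor with $v$ in $B$. Combined with $a(u),b(u)\geq 2$, this should restrict $U$ essentially to a single heavy edge $vu^{*}$ of multiplicity $w_G(v)$. Applying the same swap argument to $u^{*}$ and exploiting its tight degree identity should then produce a vertex forced to violate an already saturated constraint, yielding the desired contradiction.

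The principal obstacle --- and presumably the source of the earlier failed attempt --- is the weight bookkeeping for multi-edges under the swap. When $v$ moves from $A$ to $B$, the quantity $w_G(u)$ is unchanged, but $d_{G[B]}(u)$ jumps by exactly $\mu_G(u,v)$, which may equal $w_G(v)$ and thereby swallow the entire $-3$ slack in one blow. A successful proof will likely require either refining $\Phi$ so that it charges multi-edges in $E_G(A,B)$ by a coefficient reflecting their multiplicity, or splitting cases according to whether $w_G(v)$ is realized on an edge to $A$ or to $B$, so that the girth hypothesis can be brought to bear precisely where the single-edge version of the argument breaks down.
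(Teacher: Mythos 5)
This statement is a \emph{conjecture} in the paper, not a theorem: the authors explicitly report that their own proof attempt ``turned out wrong'' and that they could not adapt the Gerber--Kobler or Liu--Xu arguments to multigraphs. So there is no paper proof to match, and your proposal would need to stand entirely on its own. It does not: it is a plan with admitted gaps rather than a proof, and several of the steps you do spell out are already broken. First, the opening reduction (``the degree inequality must be tight at every vertex'') fails: if you delete an edge $e=vu$ because $v$ has slack, the inequality at the \emph{other} endpoint $u$ drops by one on the left while $w_{G-e}(u)$ may be unchanged, so $G-e$ need not satisfy the hypothesis and minimality cannot be invoked. Second, the bookkeeping for the potential $\Phi$ is wrong in both directions: moving $v$ from $A$ to $B$ \emph{increases} $d_{G[B]}(u)$ for $u\in N_G(v)\cap B$ (so their deficits shrink, they do not ``become newly deficient''), while it \emph{decreases} $d_{G[A]}(u)$ by $\mu_G(u,v)$ for every $u\in N_G(v)\cap A$, a cost your estimate of the change in $\Phi$ omits entirely. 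This omitted cost is exactly where the multiplicity $\mu_G(u,v)$, which can be as large as $w_G(v)$, overwhelms the $-3$ slack --- the very difficulty the paper identifies.

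Finally, the step you call ``the crux'' --- using triangle-freeness and the one-$C_4$-per-vertex hypothesis to kill the set $U$ --- is precisely the part you do not carry out; you yourself concede that a successful proof ``will likely require'' a different potential or a case split that you do not supply. Note also that the structural hypothesis is about cycles in the underlying multigraph: two vertices joined by a multi-edge of multiplicity $\geq 2$ already lie on a cycle of length $2$, and parallel edges create many $4$-cycles through a common vertex, so one must first pin down how the girth-type hypothesis even constrains multiplicities before the combinatorial step can be attempted. As written, the proposal is a reasonable research outline for attacking the conjecture, but it neither proves the statement nor repairs the obstruction that defeated the authors.
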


\section{Preliminary considerations}

In this section we shall prove two auxiliary results. First we need some notation. Let $G$ be a graph, and let $f : V(G) \to \mathbb{Z}_{\geq 0}$ be a function. If $X\subseteq V(G)$ and $v \in V(G)$, then we write $d_X(v)$ for the degree of $v$ in $G[X \cup \{v\}]$ (note that $v\in X$ may or may not hold). A subset $X \subseteq V(G)$ is said to be {\em $f$-meager} in $G$, if for every non-empty induced subset $Y$ of $X$ there is a vertex $v \in Y$ such that $d_Y(v) \leq f(v) + w_G(v) - 1$. A set $X \subseteq V(G)$ is called {\em $f$-nice} in $G$, if $d_X(v) \geq f(v)$ for all $v \in X$.

Now let $a,b: V(G) \to \mathbb{R}_\geq 0$ be two functions. Note that a partition $(A,B)$ is an $(a,b)$-feasible partition of $G$ if and only if $A$ is $a$-nice in $G$ and $B$ is $b$-nice in $G$. We say that a pair $(A,B)$ is an {\em $(a,b)$-feasible pair} of $G$, if $A$ and $B$ are disjoint subsets of $V(G)$ such that $A$ is $a$-nice and $B$ is $b$-nice. Furthermore, a partition $(A,B)$ of $V(G)$ is called an {\em $(a,b)$-meager} partition of $G$, if $A$ is $a$-meager in $G$ and $B$ is $b$-meager in $G$.

In the proofs of Theorem \ref{Theorem:Hauptsatz} and Theorem \ref{Theorem:K_4^--free} we will need the following two observations related to \cite{Stieb}.

\begin{proposition}
\label{FeasPart}
Let $G$ be a graph with $\delta(G) \geq 1$, and let $a,b: V(G) \to \mathbb{Z}_{\geq 0}$ be two functions such that $d_G(v) \geq a(v) + b(v) + 2w_G(v) - 3$ for all $v \in V(G)$. If there exists an $(a,b)$-feasible pair, then there exists an $(a,b)$-feasible partition of $G$, too.
\end{proposition}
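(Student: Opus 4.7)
The plan is to consider an $(a,b)$-feasible pair $(A,B)$ of $G$ with $|A|+|B|$ maximum and show that $C := V(G)\setminus(A\cup B)$ must be empty, which immediately yields the desired partition. Suppose, for contradiction, that $C\neq\emptyset$. On $C$ I define the \emph{deficit} functions $a'(v) := a(v) - d_A(v)$ and $b'(v) := b(v) - d_B(v)$. If some non-empty $Y\subseteq C$ were $a'$-nice in $G[C]$, then $A\cup Y$ would be $a$-nice in $G$, because every $v\in Y$ gains $d_Y(v)\geq a'(v)$ neighbours inside $Y$ while vertices of $A$ only gain further neighbours; then $(A\cup Y,B)$ would be a feasible pair with strictly larger total, contradicting maximality. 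Hence no non-empty $Y\subseteq C$ is $a'$-nice in $G[C]$, and symmetrically none is $b'$-nice; in particular, taking $Y=\{v\}$, one has $a'(v), b'(v) \geq 1$ for every $v\in C$.

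Next I would extract a peeling order of $C$. By a straightforward induction on $|C|$ (at each step the current remaining set has some vertex of low degree, which we strip off), there is an enumeration $v_1,v_2,\dots,v_n$ of $C$ such that, writing $C_i := \{v_i,v_{i+1},\dots,v_n\}$,
$$d_{C_i}(v_i) \;\leq\; a'(v_i) - 1 \;=\; a(v_i) - d_A(v_i) - 1 \qquad (i=1,\dots,n).$$
Summing over $i$ counts each (multi-)edge of $G[C]$ exactly once at its smaller-index endpoint, so
$$|E(G[C])| \;\leq\; \sum_{v\in C}\bigl(a(v) - d_A(v) - 1\bigr),$$
and an entirely analogous enumeration using the $b'$-condition gives the twin bound $|E(G[C])| \leq \sum_{v\in C}(b(v) - d_B(v) - 1)$.

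Adding the two bounds and substituting into the handshake identity $\sum_{v\in C} d_G(v) = \sum_{v\in C}(d_A(v) + d_B(v)) + 2|E(G[C])|$ yields $\sum_{v\in C} d_G(v) \leq \sum_{v\in C}(a(v)+b(v)) - 2|C|$. Combined with the hypothesis $d_G(v) \geq a(v)+b(v)+2w_G(v)-3$, the $\sum(a+b)$ terms cancel and the inequality collapses to $2\sum_{v\in C} w_G(v) \leq |C|$. But $\delta(G)\geq 1$ forces $w_G(v)\geq 1$ at every vertex, so $\sum_{v\in C} w_G(v) \geq |C|$, contradicting $|C|\geq 1$. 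The only (mild) obstacle is choosing the right extremal quantity: maximizing $|A|+|B|$ rather than, say, only $|A|$ is precisely what lets both the $a'$- and $b'$-peeling conditions on $C$ be read off simultaneously from a single extremal pair, making the double-counting symmetric.
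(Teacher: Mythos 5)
Your proof is correct, but it takes a genuinely different route from the paper's. The paper also fixes a feasible pair $(A,B)$ maximizing $|A\cup B|$, but then argues in one local step: since $(A,B\cup C)$ cannot be feasible while $B$ itself is $b$-nice, some $v\in C$ has $d_{B\cup C}(v)\le b(v)-1$, whence $d_A(v)\ge a(v)+2w_G(v)-2\ge a(v)$ and $(A\cup\{v\},B)$ already beats the maximal pair. You instead rule out \emph{every} augmentation of $A$ or of $B$ by a subset of $C$, deduce that $G[C]$ is degenerate with respect to both deficit functions $a'$ and $b'$, and reach a contradiction by double counting $E(G[C])$ against the handshake identity. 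Both arguments are sound and both in the end only use the consequence $d_G(v)\ge a(v)+b(v)-1$ of the hypothesis (via $w_G(v)\ge 1$); the paper's version is shorter because it only ever needs to move a single vertex, while yours carries the extra machinery of the peeling orders and the summation, which here buys nothing additional but is a perfectly valid (and more global) way to exploit the same extremal choice. One small presentational point: your phrase ``each (multi-)edge of $G[C]$ is counted exactly once at its smaller-index endpoint'' is the right bookkeeping, since $d_{C_i}(v_i)$ counts edges from $v_i$ only into $\{v_i,\dots,v_n\}$; it is worth stating explicitly that $a'$ is integer-valued so that failure of $a'$-niceness gives the clean bound $d_Y(v)\le a'(v)-1$.
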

\begin{proof}
Note that $\delta(G) \geq 1$ implies $w_G(v)\geq 1$ for all $v \in V(G)$. Consider an $(a,b)$-feasible pair $(A,B)$ such that $A \cup B$ is maximal. All we need to show is that $A \cup B = V(G)$. Assume, in contrary, that $C = V(G) \setminus (A \cup B)$ is non-empty. Since $A \cup B$ is maximal, this implies that $(A, B \cup C)$ is not $(a,b)$-feasible. Thus, there exists a vertex $v \in C$ such that $d_{B \cup C}(v) \leq b(v) - 1$. Due to the fact that $d_G(v) \geq a(v) + b(v) + 2w_G(v) - 3$, we conclude $d_A(v) \geq a(v) + 2w_G(v) - 2 \geq a(v)$. But then $(A \cup \{v\}, B)$ is an $(a,b)$-feasible pair, in contradiction to the maximality of $A \cup B$.
\end{proof}

\begin{proposition}
\label{Non-Meager}
Let $G$ be a graph with $\delta(G) \geq 1$, and let $a,b : V(G) \to \mathbb{Z}_{\geq 1}$ be two functions. Moreover assume, that
$$d_G(v) \geq a(v) + b(v) + 2w_G(v) - 2$$
for all $v \in V(G)$.
Then, $G$ has an $(a,b)$-feasible partition, provided that there is no $(a,b)$-meager partition of $G$.
\end{proposition}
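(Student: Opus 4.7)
I argue by contradiction: assume $G$ admits no $(a,b)$-feasible partition and construct an $(a,b)$-meager partition, contradicting the hypothesis. The main tool is the \emph{maximum $a$-kernel} $Y^* \subseteq V(G)$, defined as the union of all non-empty $Y \subseteq V(G)$ with $d_Y(v) \geq a(v) + w_G(v)$ for every $v \in Y$; since the union of two such sets is again of this form (each vertex's degree only grows when more vertices are included), $Y^*$ itself satisfies the same inequality (or is empty). Similarly define the maximum $b$-kernel $Z^*$.

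First, if $\emptyset \subsetneq Y^* \subsetneq V(G)$ then $(Y^*, V(G) \setminus Y^*)$ is an $(a,b)$-feasible partition: the set $Y^*$ is $a$-nice by its defining property, and for every $v \in V(G) \setminus Y^*$ maximality forces $d_{Y^*}(v) \leq a(v)+w_G(v)-1$ (otherwise $Y^* \cup \{v\}$ would be a larger $a$-kernel), whence the degree hypothesis gives $d_{V(G) \setminus Y^*}(v) = d_G(v) - d_{Y^*}(v) \geq (a(v)+b(v)+2w_G(v)-2) - (a(v)+w_G(v)-1) = b(v) + w_G(v) - 1 \geq b(v)$, so $V(G) \setminus Y^*$ is $b$-nice. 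The symmetric argument handles $\emptyset \subsetneq Z^* \subsetneq V(G)$. Either case produces a feasible partition and contradicts the assumption. Second, if $Y^* = \emptyset$ then $V(G)$ itself is $a$-meager; meagerness is inherited by subsets and every singleton is trivially both $a$- and $b$-meager (since $a, b, w_G \geq 1$ implies $0 = d_{\{v_0\}}(v_0) \leq a(v_0)+w_G(v_0)-1$ and similarly for $b$), so the partition $(V(G) \setminus \{v_0\}, \{v_0\})$ is $(a,b)$-meager for any $v_0$, contradicting the no-meager hypothesis. Symmetrically $Z^* = \emptyset$ is ruled out.

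The remaining case is $Y^* = Z^* = V(G)$, meaning $d_G(v) \geq a(v) + w_G(v)$ and $d_G(v) \geq b(v) + w_G(v)$ for every $v$. Here the extremal-kernel technique has exhausted its direct force, so I close this case by induction on $|V(G)|$: select a vertex $v_0$, reduce to a smaller auxiliary graph (e.g., $G - v_0$ with functions $a', b'$ shifted to absorb the edges incident to $v_0$), apply the inductive hypothesis to obtain a feasible partition of the reduced instance, and reinsert $v_0$ into the side that preserves niceness, using the slack $d_G(v_0) - a(v_0) - b(v_0) \geq 2w_G(v_0) - 2$ guaranteed by the hypothesis. The main obstacle of the proof is this final case: one must arrange the reduction so that the inductive hypothesis applies (in particular $a', b' \geq 1$ and the degree inequality still hold in the smaller graph), which is delicate when $v_0$ has high-multiplicity neighbors; choosing $v_0$ of small $w_G(v_0)$ and splitting into sub-cases according to where $v_0$ is placed after reinsertion is the natural way to carry this out.
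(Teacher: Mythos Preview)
Your treatment of the cases $Y^*=\varnothing$ and $\varnothing\subsetneq Y^*\subsetneq V(G)$ (and their $Z^*$-analogues) is correct and pleasant. The problem is the remaining case $Y^*=Z^*=V(G)$: what you call the ``main obstacle'' is in fact a genuine gap. Your induction sketch cannot be completed as stated, because the inductive hypothesis is the Proposition itself, whose conclusion is conditional on the \emph{absence} of an $(a',b')$-meager partition in the reduced graph. You never verify this (and there is no reason it should hold for $G-v_0$), nor do you say what to do if the smaller instance returns a meager partition instead of a feasible one. You yourself flag the reduction of $a,b$ to $a',b'\ge 1$ as ``delicate'' without resolving it; combined with the missing meager-partition check, case~5 is simply not proved.

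The paper sidesteps this difficulty entirely by reversing your extremal choice: instead of the \emph{maximum} $a$-kernel, it takes a \emph{minimum} non-empty $a$-nice set $A$ (which exists since $V(G)\setminus\{v\}$ is $a$-nice for every $v$). Minimality forces every proper non-empty subset of $A$ to contain a vertex of degree below $a$, and hence $A$ itself is $a$-meager. Now the hypothesis ``no $(a,b)$-meager partition'' applies directly: the complement $B=V(G)\setminus A$ cannot be $b$-meager, so it contains a non-empty $B'$ with $d_{B'}(v)\ge b(v)+w_G(v)\ge b(v)$ for all $v\in B'$. Then $(A,B')$ is an $(a,b)$-feasible pair, and Proposition~\ref{FeasPart} upgrades it to a feasible partition. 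No induction, no case $Y^*=V(G)$ to worry about. The moral: choosing the minimal nice set (rather than the maximal kernel) makes the no-meager hypothesis do all the work in one stroke.
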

\begin{proof}
Since $\delta(G) \geq 1$, we have $|G| \geq 2$ and $w_G(v) \geq 1$ for all $v \in V(G)$. We choose a non-empty subset $A$ of $V(G)$ such that
\begin{itemize}
\item[(a)] $A$ is $a$-nice, and
\item[(b)] $|A|$ is minimum subject to (a).
\end{itemize}
Let $B=V(G) \setminus A$. Obviously, $V(G) \setminus \{v\}$ satisfies (a) for each vertex $v$, therefore, $A$ exists and $B$ is non-empty. Because of (b), for every proper subset $A'$ of $A$ that is non-empty, we find a vertex $u \in A'$ fulfilling $d_{A'}(u) \leq a(u) - 1$. Since $A'$ can be chosen such that $|A'|=|A|-1$, this implies the existence of a vertex $u \in A$ such that $d_A(u) \leq a(u) + w_G(u) - 1$. Thus, $A$ is $a$-meager. Since there is no $(a,b)$-meager partition of $G$, $B$ is not $b$-meager and, therefore, there is a non-empty subset $B'$ of $B$ such that $d_{B'}(v) \geq b(v) + w_G(v) \geq b(v)$ for all $v \in B'$. Hence, $(A,B')$ is an $(a,b)$-feasible pair and Proposition~\ref{FeasPart} implies the existence of an $(a,b)$-feasible partition of $G$.
\end{proof}

Let $G$ be a graph, let $a, b:V(G) \to \ganzo$ be two functions, and let $(A,B)$ be a partition of $V(G)$. We define the {\em $(a,b)$-weight} $w(A,B)$ as
$$w(A,B) = |E_G(A)| + |E_G(B)| + \sum_{v \in A} b(v) + \sum_{v \in B} a(v).$$
If $|A| \geq 2$ we can choose an arbitrary vertex $x \in A$ and $(A - \{x\}, B \cup \{x\})$ remains a partition of $V(G)$. In particular, it holds
\begin{equation}
\label{eq:AustauschA-x}
w(A - \{x\}, B \cup \{x\}) - w(A,B)  =  d_B(x) - d_A(x) + a(x) - b(x).
\end{equation}
%
Similarly, if $|B| \geq 2$ we may choose $y \in B$ and $(A \cup \{y\}, B - \{y\})$ is also a partition of $V(G)$ fulfilling
\begin{equation}
\label{eq:AustauschB-y}
w(A \cup \{y\}, B - \{y\}) - w(A,B) = d_A(y) - d_B(y) + b(y) - a(y).
\end{equation}
If $v \in A$, then $d_{A \setminus \{v\}}(u) = d_A(u) - \mu(u,v)$ for all $u \in A \setminus \{v\}$ and, hence,
\begin{equation}
\label{eq:vertex_deletion}
d_{A \setminus \{v\}}(u) \geq d_A(u) - w_G(u) \quad \text{and} \quad d_{A \setminus \{v\}} \geq d_A(u) - w_G(v).
\end{equation}
%

\section{Proof of Theorem \ref{Theorem:Hauptsatz}}

Let $G$ be a graph with $\delta(G)\geq 1$, and let $a,b: V(G) \to \mathbb{Z}_{\geq 0}$ be two functions such that
\begin{equation} \label{eq:minimum_degree_constraint}
d_G(v) \geq a(v) + b(v) + 2w_G(v) - 1
\end{equation}
for all $v \in V(G)$.
Our aim is, to prove that there is an $(a,b)$-feasible partition of $G$.
Since $\delta(G)\geq 1$, we obtain that $w_G(v) \geq 1$ for all $v \in V(G)$.
As a conclusion, if there exists a vertex $v$ such that $a(v)=0$ or $b(v)=0$, then equation \eqref{eq:minimum_degree_constraint} implies that $(\{v\}, V(G) \setminus \{v\})$ or $(V(G)\setminus \{v\}, \{v\})$, respectively, is an $(a,b)$-feasible partition of $G$. Thus, in the following we may assume that
\begin{equation} \label{eq:a(v)&b(v)geq1}
a(v) \geq 1 \text{ and } b(v) \geq 1
\end{equation}
for every vertex $v \in V(G)$.
If there is no $(a,b)$-meager partition of $V(G)$, then we conclude from
Proposition~\ref{Non-Meager} that there is an $(a,b)$-feasible partition of $G$, and we are done. It remains to consider the case that there is an  $(a,b)$-meager partition of $G$. Then, we choose an $(a,b)$-meager partition $(A,B)$ of $G$ such that $w(A,B)$ is maximum. Since $A$ is $a$-meager, there is a vertex $x \in A$ fulfilling $d_A(x) \leq a(x) + w_G(x) - 1$. Together with equations \eqref{eq:minimum_degree_constraint} and \eqref{eq:a(v)&b(v)geq1}, this implies $d_B(x) \geq b(x) + w_G(x) \geq w_G(x) + 1$. Hence, $|B| \geq 2$. By symmetry, we conclude that $|A| \geq 2$, too.

Next, we claim the existence of a non-empty subset $\tilde{A} \subseteq A$ such that $d_{\tilde{A}}(v) \geq a(v)$ for all $v \in \tilde{A}$. Otherwise, $A$ is $(a-1)$-degenerate and, by \eqref{eq:vertex_deletion}, we conclude that $A \cup \{y\}$ is $a$-meager for all $y \in B$. Since $B$ is $b$-meager, there exists a vertex $y \in B$ such that $d_B(y) \leq b(y) + w_G(y) - 1$ and, by \eqref{eq:minimum_degree_constraint}, we have
$d_A(y) \geq a(y) + w_G(y)$. Because of $|B| \geq 2$, the pair $(A \cup \{y\},B \setminus\{y\})$ is an $(a,b)$-meager partition of $G$ such that
$$w(A \cup \{y\},B \setminus \{y\}) - w(A,B) = d_{A}(y) - d_B(y) + b(y) - a(y) \geq 1$$ (by \eqref{eq:AustauschB-y}), in contradiction to the maximality of $w(A,B)$. This proves the claim. By symmetry, there is also a non-empty subset $\tilde{B} \subseteq B$ such that $d_{\tilde{B}} \geq b(v)$ for all $v \in \tilde{B}$ and, hence, $(\tilde{A}, \tilde{B})$ is an $(a,b)$-feasible pair. By Proposition~\ref{FeasPart}, this implies the existence of an $(a,b)$-feasible partition of $G$, and the proof is complete.

\section{Proof of Theorem~\ref{Theorem:K_4^--free}}
Let $G$ be a $K_4^-$-free graph with $\delta(G) \geq 1$, and let $a,b: V(G) \to \mathbb{Z}_{\geq 1}$ be two functions such that
\begin{equation}\label{eq:degree_constraint_K_4^--free}
d_G(v) \geq a(v) + b(v) + 2w_G(v) - 2
\end{equation}
for all $v \in V(G)$. If $G$ has an $(a,b)$-feasible pair, then we are done by Proposition~\ref{FeasPart}. Otherwise, $G$ contains no $(a,b)$-feasible pair. Firstly, we claim that $|G| \geq 3$. Otherwise, $|G|=2$ and, thus, $V(G)=\{u,v\}$.  Since $\delta(G) \geq 1$, equation \eqref{eq:degree_constraint_K_4^--free} implies $$d_G(v) \geq a(v) + b(v) + 2w_G(v) - 2 \geq 2w_G(v)=2d_G(v),$$ which is not possible. As a consequence, $|G| \geq 3$. Secondly, we show that for each edge $uv \in E(G)$ it holds
\begin{equation}\label{eq:K_4^--free:a(x)+a(y)geq3}
a(u) + a(v) \geq 3 \quad \text{and} \quad b(u) + b(v) \geq 3.
\end{equation}
Assume, to the contrary, that there is an edge $uv \in E(G)$ that does not fulfill the above equation. By symmetry, we can assume that $b(u)=b(v)=1$. Together with equation \eqref{eq:degree_constraint_K_4^--free}, this implies that $d_G(u) \geq a(u) + w_G(u)$ and that $d_G(v) \geq a(v) + w_G(v)$. If $N_G(u) \cap N_G(v)$ is empty, we claim that for $A=V(G) \setminus \{u,v\}$ and $B= \{u,v\}$, the pair $(A,B)$ is an $(a,b)$-feasible partition. This follows from the fact that $$d_{A}(w) \geq d_G(w) - w_G(w) \geq a(w)$$ for all $w \in A$ (by \eqref{eq:degree_constraint_K_4^--free}), $d_B(w) \geq 1 = b(w)$ for all $w \in B$, and that $A$ is non-empty because of $|G| \geq 3$. Hence, $N_G(u) \cap N_G(v)$ is non-empty. Due to the fact that $G$ is $K_4^-$-free, $N_G(u) \cap N_G(v)$ consists of exactly one vertex $w$ and each vertex of $V(G) \setminus \{u,v,w\}$ has at most one neighbor within $\{u,v,w\}$. If $b(w)=1$, then $(V(G) \setminus \{u,v,w\},\{u,v,w\})$ is an $(a,b)$-feasible partition of $V(G)$ (by a similar argument as before). Otherwise, $b(w) \geq 2$ and, by \eqref{eq:degree_constraint_K_4^--free}, $$d_G(w) \geq a(w) + 2w_G(w).$$ However, this leads to $(V(G) \setminus \{u,v\},\{u,v\})$ being an $(a,b)$-feasible partition, a contradiction. This proves the claim that \eqref{eq:K_4^--free:a(x)+a(y)geq3} holds.

If $G$ has no $(a,b)$-meager partition, then we are done by Proposition~\ref{Non-Meager}.
Hence, we only need to consider the case that there is an $(a,b)$-meager partition of $G$, say $(A,B)$. Since $G$ has no $(a,b)$-feasible pair and by symmetry, we can assume that $B$ is $(b-1)$-degenerate and, therefore, $(b-1)$-meager. We choose an $(a,b-1)$-meager partition $(A,B)$ such that
\begin{itemize}
\item[(1)] $w(A,B)$ is maximum and
\item[(2)] $|A|$ is minimum subject to (1).
\end{itemize}
Firstly, we show that $B$ contains at least two vertices. Otherwise, $B=\{y\}$. Since $A$ is $a$-meager, there is an $x \in A$ such that $d_A(x) \leq a(x) + w_G(x) - 1$, and, by \eqref{eq:degree_constraint_K_4^--free}, $$d_B(x) \geq b(x) + w_G(x) - 1 \geq w_G(x).$$ Since $B$ consists of only one vertex, this implies that $d_B(x)=w_G(x)$ and, hence, $b(x)=1$. By \eqref{eq:K_4^--free:a(x)+a(y)geq3}, $b(y) \geq 2$ and, thus, $$d_{B \cup \{x\}}(y)=w_G(x) = 2 + w_G(x) - 2 \leq b(y) + w_G(y) - 2.$$ Since $|G| \geq 3$, this implies that $(A \setminus \{x\}, B \cup \{x\})$ is also an $(a,b-1)$-meager partition and, by \eqref{eq:AustauschA-x},
\begin{equation*}
w(A\setminus\{x\}, B \cup \{x\}) - w(A,B)  =  d_B(x) - d_A(x) + a(x) - b(x) \geq 0,
\end{equation*}
contradicing the choice of $(A,B)$. Hence, $|B| \geq 2$.

Since $B$ is $(b-1)$-meager we can choose a $y \in B$ such that $d_B(y) \leq b(y) + w_G(y) - 2$. Now we claim that there are $x,z \in A$ fulfilling $d_A(x) \leq a(x) + w_G(x) - 1$ and $d_A(z) \leq a(z) + w_G(z) - 1$ such that $G[\{x,y,z\}]$ contains a triangle. Due to the fact that $d_B(y) \leq b(y) + w_G(y) - 2$, we get
$$d_A(y)\geq a(y) + w_G(y) \geq w_G(y) + 1,$$
implying that $|A| \geq 2$. Let $A' = A \cup \{y\}$ and let $B' = B \setminus \{y\}$.
By \eqref{eq:AustauschB-y},
\begin{eqnarray*}
w(A',B') - w(A,B) & = & d_A(y) - d_B(y) + b(y) - a(y)\\
& \geq & a(y) + w_G(y) - b(y) - w_G(y) + 2 + b(y) - a(y) \\
& \geq & 2.
\end{eqnarray*}
Since $B'$ is still $(b-1)$-meager, this implies that $A'$ is not $a$-meager since otherwise this would contradict the choice of $(A,B)$. Thus, there exists a subset $\tilde{A}$ of $A$ such that $\tilde{A} \cup \{y\}$ is $(a + w_G)$-nice and, therefore, $\tilde{A}$ is $a$-nice. If $A$ is not $a$-nice, then there is an $x' \in A \setminus \tilde{A}$ such that $d_A(x') \leq a(x') - 1$ and, by \eqref{eq:degree_constraint_K_4^--free}, $d_B(x') \geq b(x') + 2w_G(x') - 1$. Let $A'' = A \setminus \{x'\}$ and $B'' = B \cup \{x'\}$. Then, $A''$ is still $a$-meager and, since $\tilde{A} \subseteq A''$ and due to the fact that $G$ contains no $(a,b)$-feasible pair, $B''$ is $(b-1)$-degenerate. Hence, $(A'',B'')$ is an $(a,b-1)$-meager partition and, by \eqref{eq:AustauschA-x},
\begin{eqnarray*}
w(A'',B'') - w(A,B)& = & d_B(x') - d_A(x') + a(x') - b(x') \\
& \geq & b(x') + 2w_G(x') - 1 - a(x') + 1 + a(x') - b(x') \\
& \geq & 2,
\end{eqnarray*}
contradicting the choice of $(A,B)$. Consequently, $A$ is $a$-nice. Let $$C = \{ u \in A ~|~ d_A(u) \leq a(u) + w_G(u) - 1\}.$$
Since $A$ is $a$-meager, $C$ is non-empty. We claim that $C \subseteq \tilde{A}$. Otherwise, there is an $x' \in C \setminus \tilde{A}$. Then, by \eqref{eq:degree_constraint_K_4^--free}, $d_B(x') \geq b(x')+ w_G(x') - 1 \geq b(x')$. Let again $A''=A \setminus \{x'\}$, and $B''= B \cup \{x'\}$. Since $\tilde{A} \subseteq A''$ and $\tilde{A}$ is $a$-nice, the set $B''$ must be $(b-1)$-meager and, hence, $(A'',B'')$ is an $(a,b-1)$-meager partition. Furthermore, by \eqref{eq:AustauschA-x}, it follows that
\begin{eqnarray*}
w(A'',B'')-w(A,B) & = & d_B(x') - d_A(x') + a(x') - b(x') \geq 0.
\end{eqnarray*}
Since $|A''| < |A|$, this contradicts the choice of $(A,B)$. Hence, the claim that $C \subseteq \tilde{A}$ is proven. Since $\tilde{A} \cup \{y\}$ is $(a+w_G)$-nice, it follows that $C \subseteq N_G(y)$, which leads to $C \subseteq \tilde{A} \cap N_G(y)$.

Let $x \in C$. If $N_G(x) \cap C = \varnothing$, then let $A'' = A \setminus \{x\}$ and $B''=B \cup \{x\}$. Since $A$ is $a$-nice, and since $d_G(v) \geq a(v) + w_G(v)$ for all $v \in A \setminus C$, $A''$ is still $a$-nice and, thus, $B''$ is $(b-1)$-meager. However, \eqref{eq:AustauschA-x} implies
\begin{eqnarray*}
w(A'',B'')-w(A,B) & = & d_B(x) - d_A(x) + a(x) - b(x) \\
& \geq & b(x) +w_G(x) - 1 - a(x)\\
&& - w_G(x) + 1 + a(x) - b(x)\\
& \geq &0.
\end{eqnarray*}
Since $|A''| < |A|$, this contradicts the choice of $(A,B)$. Hence, there is a vertex $z \in N_G(x) \cap C$ and $G[\{x,y,z\}]$ contains a triangle, as claimed.
To complete the proof, let $A''= A \setminus \{x,z\}$ and let $B''= B \cup \{x,z\}$. We claim that $(A'',B'')$ is an $(a,b-1)$-meager partition. Firstly, we show that $A'' \neq \varnothing$. Otherwise, since $\{x,z\} \subseteq C \subseteq \tilde{A} \subseteq A$, we have $C=\tilde{A}=A$. Thus, $A'=A \cup \{y\} = \{x,y,z\}$ is $(a + w_G)$-nice. Since $G$ contains no $K_4^-$, each vertex of $B'= B \setminus \{y\}$ has at most one neighbor within $\{x,y,z\}$.
Thus, $d_{B'}(v) \geq d_G(v) - w_G(v)$ for all $v \in B'$ and, by \eqref{eq:degree_constraint_K_4^--free}, we get
$$d_{B'}(v) \geq  a(v) + b(v) + w_G(v) - 2 \geq b(v)$$
for all $v  \in B'$, which implies that $(A',B')$ is an $(a,b)$-feasible partition of $V(G)$, a contradiction. As a consequence, $A'' \neq \varnothing$. Since $G$ contains no $K_4^-$ and since $C \subseteq N_G(y)$, $N_G(x) \cap C=\{z\}$ and $N_G(z)\cap C = \{x\}$. Thus, if $v \in A'' \cap C$, then $v$ has no neighbour in $\{x,z\}$, and therefore $d_{A''}(v) \geq a(v)$ (since $A$ is $a$-nice). If $v \in A'' \setminus C$, then $v$ has at most one neighbor in $\{x,z\}$. Thus, we get $$d_{A''}(v) \geq d_A(v) - w_G(v) \geq a(v) + w_G(v) - w_G(v) \geq a(v).$$  As a consequence, $A''$ is $a$-nice (and non-empty). Since $G$ has no $(a,b)$-feasible partition, this implies that $B''$ is $(b-1)$-meager and $(A'',B'')$ is an $(a,b-1)$-meager partition. On the other hand,
\begin{eqnarray*}
w(A'',B'') - w(A,B) & \geq & d_B(x) + d_B(z) - d_A(x) - d_A(z) + 2 \\
& & - b(x) - b(z) + a(x) + a(z) \\
& \geq & b(x) + w_G(x) - 1 + b(z) + w_G(z) - 1 \\
& & - a(x) - w_G(x) + 1 - a(z) - w_G(z) + 1 + 2 \\
& & - b(x) - b(z) + a(x) + a(x) \geq  2,
\end{eqnarray*}
which contradicts the choice of $(A,B)$. This completes the proof.

\section{Concluding remarks}

It follows from a simple induction that Theorems \ref{Theorem:Hauptsatz} and \ref{Theorem:K_4^--free} can be extended to partitions of more than two sets.

\begin{corollary}
Let $G$ be a graph, and let $f_1, f_2, \ldots, f_p:V(G) \to \ganz_{\geq h-1}$ be $p$ functions with $p\geq 2$ and $h\in \{1,2\}$. Assume that $\de(G)\geq 1$ and
$$d_G(v)\geq f_1(v)+f_2(v)+\cdots + f_p(v)+(p-1)(2w_G(v)- h)$$
for all $v\in V(G)$. Then, there is a partition $(A_1,A_2, \ldots, A_p)$ of $V(G)$ such that $d_{A_i}(v)\geq f_i(v)$ for every $i\in \{1,2, \ldots, p\}$ and every $v\in A_i$, provided that either $h=1$, or $h=2$ and $G$ is $K_4^-$-free.
\end{corollary}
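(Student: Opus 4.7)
The natural plan is induction on $p$. The base case $p=2$ is handled directly by Theorem~\ref{Theorem:Hauptsatz} when $h=1$ and by Theorem~\ref{Theorem:K_4^--free} when $h=2$, taking $a=f_1$ and $b=f_2$: the corollary's hypothesis $d_G(v)\geq f_1(v)+f_2(v)+(2w_G(v)-h)$ then coincides exactly with the hypothesis of the corresponding theorem, and the range condition $f_i\geq h-1$ matches the range condition on $a,b$ in each theorem, so no further work is needed.

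Assume $p\geq 3$ and that the corollary has been established for $p-1$. Define $a,b:V(G)\to\ganz_{\geq 0}$ by $a(v)=f_1(v)$ and
$$b(v)=f_2(v)+f_3(v)+\cdots+f_p(v)+(p-2)(2w_G(v)-h).$$
Since $\de(G)\geq 1$ yields $w_G(v)\geq 1$, the summand $(p-2)(2w_G(v)-h)$ is non-negative, so $b(v)\geq (p-1)(h-1)\geq h-1$, and $a(v)\geq h-1$ trivially. A direct calculation gives
$$a(v)+b(v)+2w_G(v)-h=\sum_{i=1}^p f_i(v)+(p-1)(2w_G(v)-h)\leq d_G(v),$$
so the pair $(a,b)$ meets the hypothesis of Theorem~\ref{Theorem:Hauptsatz} (if $h=1$) or of Theorem~\ref{Theorem:K_4^--free} (if $h=2$, with $K_4^-$-freeness inherited from $G$). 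This delivers an $(a,b)$-feasible partition $(A_1,B)$ of $V(G)$, which we take as the first piece.

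It remains to split $B$ into $p-1$ further parts by applying the inductive hypothesis to $G[B]$ with the restricted functions $f_2,\ldots,f_p$. Three items must be checked. First, $G[B]$ inherits $K_4^-$-freeness from $G$ when $h=2$. Second, every $v\in B$ satisfies $d_{G[B]}(v)\geq b(v)$, which forces $d_{G[B]}(v)\geq p-2\geq 1$ when $h=1$ and $d_{G[B]}(v)\geq p-1\geq 2$ when $h=2$, giving $\de(G[B])\geq 1$. Third, because $w_{G[B]}(v)\leq w_G(v)$ and $2w_G(v)-h\geq 0$, we may weaken the feasibility bound to
$$d_{G[B]}(v)\geq \sum_{i=2}^p f_i(v)+(p-2)(2w_{G[B]}(v)-h),$$
which is precisely the corollary's hypothesis for $G[B]$, $p-1$, and $h$. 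Induction then produces a partition $(A_2,\ldots,A_p)$ of $B$, and $(A_1,A_2,\ldots,A_p)$ is the partition we sought. The argument is essentially bookkeeping; the only step that requires real attention is transferring the degree inequality to $G[B]$, which works solely because $2w_G(v)-h\geq 0$ lets the monotonicity $w_{G[B]}(v)\leq w_G(v)$ be used to weaken rather than strengthen the needed bound.
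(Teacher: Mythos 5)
Your proof is correct and is precisely the ``simple induction'' the paper alludes to without writing out: merge $f_2,\ldots,f_p$ (plus the slack term $(p-2)(2w_G(v)-h)$) into a single function $b$, apply Theorem~\ref{Theorem:Hauptsatz} or Theorem~\ref{Theorem:K_4^--free}, and recurse on $G[B]$. All the bookkeeping you check (non-negativity of $b$, $\de(G[B])\geq 1$, inheritance of $K_4^-$-freeness, and the transfer of the degree bound via $w_{G[B]}(v)\leq w_G(v)$) is exactly what is needed, so nothing is missing.
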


If we renounce the condition $\delta(G) \geq 1$ in Theorem \ref{Theorem:Hauptsatz}, it may happen that $G$ only consists of isolated vertices. But then, if $a(v)=1$ for all $v \in V(G)$ and if $b(v)=0$ for all $v \in V(G)$, the only possible choice would be $A=\emptyset$ and $B= V(G)$, which is not a partition. However, demanding $G$ to have at least one edge is sufficient. In this case we can delete all isolated vertices and Theorem \ref{Theorem:Hauptsatz} implies the existence of an $(a,b)$-feasible partition for the remaining graph. By inserting each isolated vertex $v$ to the set $A$ if $a(v) = 0$ and to $B$ if $b(v)=0$ will do the trick then. For Theorem \ref{Theorem:Dreikreisfrei} it is obvious that we cannot give up the condition $\delta(G) \geq 1$.

As an answer of Thomassen's Conjecture \cite{Thomassen}, we obtain the following for graphs in general.

\begin{corollary}
\label{Corollary:constant_functions}
Let $G$ be a graph and let $s,t \geq 0$ be integers. Assume that $\delta(G) \geq s + t + 2 \mu(G) - 1 \geq 1$. Then, $V(G)$ can be partitioned into two sets $A,B$ such that $\delta(G[A]) \geq s$ and $\delta(G[B]) \geq t$.
\end{corollary}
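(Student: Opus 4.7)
The plan is to derive this corollary as an immediate specialization of Theorem~\ref{Theorem:Hauptsatz} to constant functions. Set $a(v)=s$ and $b(v)=t$ for every $v\in V(G)$. Under this choice, an $(a,b)$-feasible partition $(A,B)$ is precisely a partition of $V(G)$ into non-empty sets satisfying $d_{G[A]}(v)\geq s$ for all $v\in A$ and $d_{G[B]}(v)\geq t$ for all $v\in B$, which is exactly the conclusion of the corollary.

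To invoke Theorem~\ref{Theorem:Hauptsatz}, I need to verify its hypotheses. First, $\delta(G)\geq 1$ holds by assumption. Second, by the definition of $\mu(G)$ as the maximum edge multiplicity of $G$, we have $w_G(v)=\max_{u\neq v}\mu_G(u,v)\leq \mu(G)$ for every vertex $v$. Therefore, for all $v\in V(G)$,
\[
d_G(v)\geq \delta(G)\geq s+t+2\mu(G)-1\geq s+t+2w_G(v)-1=a(v)+b(v)+2w_G(v)-1.
\]
This is the degree condition required by Theorem~\ref{Theorem:Hauptsatz}, and the theorem produces the desired $(a,b)$-feasible partition.

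There is essentially no obstacle: the corollary is a direct consequence of Theorem~\ref{Theorem:Hauptsatz} via the bound $w_G(v)\leq \mu(G)$ and the specialization to constant functions $a\equiv s$, $b\equiv t$.
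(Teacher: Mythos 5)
Your proposal is correct and matches the paper's intent exactly: the corollary is stated as an immediate consequence of Theorem~\ref{Theorem:Hauptsatz} obtained by taking the constant functions $a\equiv s$, $b\equiv t$ and using $w_G(v)\leq\mu(G)$, together with the observation that $s+t+2\mu(G)-1\geq 1$ guarantees $\delta(G)\geq 1$. The paper gives no separate proof, and your verification of the hypotheses is precisely the routine check it leaves implicit.
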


The next question standing to reason is as follows. Is it really necessary to require $d_G(v) \geq a(v) + b(v) + 2w_G(v) - 1$ in Theorem~\ref{Theorem:Hauptsatz}, or can we maybe save another value one. To answer this, consider the case $G=tK_3$, that is, $G$ is the graph on $3$ vertices in which each two vertices are joined by $t$ edges. By requesting only $d_G(v) \geq a(v) + b(v) + 2t - 2$, setting $a(v)=1$ and $b(v)=1$ for all $v \in V(G)$ leads to a counter-example, since obviously there is no partition $(A,B)$ such that $d_A(v) \geq 1$ for all $v \in A$ and $d_B(v) \geq 1$ for all $v \in B$, although $d_G(v)=2t$ is fulfilled.

For another counter-example consider the graph $H$ shown in Figure \ref{pic:cube}. If we set $G=tH$ for some $t \geq 1$, it holds $4t=d_G(v)$ for all $v \in V(G)$. By requiering only $d_G(v) \geq a(v) +  b(v) + 2t - 2$, setting $a(v)=1$ for all $v \in V(G)$ and $b(v) = 2t + 1$ gives us a counter-example. This is due to the fact that if $(A,B)$ is an $(a,b)$-feasible partition of $G$, then $A$ consists of at least two adjacent vertices that have a common neighbour $v$ in $B$. But then, $d_B(v) \leq 2t$, a contradiction.
If $H$ is the icosahedron and if $G=tH$ for some $t \geq 1$, then, by the same argumentation as above, setting $a(v)=1$ and $b(v)=3t+1$ leads to a third counter-example. Note that these examples prove that the boundary in Corollary~\ref{Corollary:constant_functions} is sharp, as well.

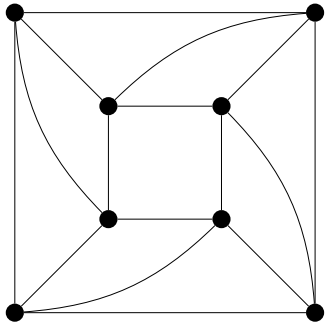
\begin{figure}[thbp]
\centering
\resizebox{.33\linewidth}{!}{

\begin{tikzpicture} [node distance=1cm, bend angle=20,
vertex/.style={circle,minimum size=3mm,very thick, draw=black, fill=black, inner sep=0mm}, information text/.style={fill=red!10,inner sep=1ex, font=\Large}, help lines/.style={-,color=black}]

\node[draw=none,minimum size=3cm,regular polygon,regular polygon sides=4] (a) {};
\foreach \x in {1,2,3,4}
\node[vertex] (v\x) at (a.corner \x) {};

\node[draw=none,minimum size=8cm,regular polygon,regular polygon sides=4] (b) {};
\foreach \x in {1,2,3,4}
\node[vertex] (u\x) at (b.corner \x) {};

\path[-]
		(v1)
		edge[help lines] (v2)
		edge[help lines] (u1)
		edge[help lines, bend left] (u4)
		(v2)
		edge[help lines, bend left] (u1)
		edge[help lines] (u2)
		edge[help lines] (v3)
		(v3)
		edge[help lines, bend left] (u2)
		edge[help lines] (u3)
		edge[help lines] (v4)
		(v4)
		edge[help lines, bend left] (u3)
		edge[help lines] (u4)
		edge[help lines] (v1)
		(u1)
		edge[help lines] (u2)
		(u2)
		edge[help lines] (u3)
		(u3)
		edge[help lines] (u4)
		(u4)
		edge[help lines] (u1);
%
%

%
%
%
%
%
\end{tikzpicture}

}
\caption{The graph $H$ serves as a starting point to design a counter-example.}
\label{pic:cube}
\end{figure}

\end{document}